\newtheorem{thm}{Theorem}[section]
\newtheorem{cor}[thm]{Corollary}
\theoremstyle{definition}
\newtheorem{definition}[thm]{Definition}
\theoremstyle{remark}
\newtheorem{remark}[thm]{Remark}
\numberwithin{equation}{section}
\newcommand{\R}{\mathbb{R}}  
\begin{document}

\title{Moduli of Continuity for Viscosity Solutions}

\author{Xiaolong Li}
\address{Department of Mathematics, University of California, San Diego, 
La Jolla, CA 92093}
\email{xil117@ucsd.edu}

\mbox{\ }\let\thefootnote\relax\footnote{2010 Mathematics Subject Classification. Primary 53C44; Secondary 35D40.
 
Key words and phrases: Modulus of Continuity, Viscosity Solution, Level Set Mean Curvature Flow.}
\begin{abstract}
In this paper, we investigate the moduli of continuity for viscosity solutions of a wide class of nonsingular quasilinear evolution equations and also for the level set mean curvature flow, which is an example of singular degenerate equations. 
We prove that the modulus of continuity is a viscosity subsolution of some one dimensional equation. This work extends B. Andrews' recent result on moduli of continuity for smooth spatially periodic solutions. 
\end{abstract}

 \maketitle

\section{Introduction}
Given a function $u:\R^n \to \R$, any function $f:[0,\infty)\to \R_{+}$ satisfying 
$$|u(y)-u(x)| \leq 2 f\left(\frac{|y-x|}{2}\right)$$
for all $x$ and $y$ is called a modulus of continuity of $u$. 
The (optimal) modulus of continuity $\omega$ of $u$ is defined by 
$$\omega(s)=\sup \left\lbrace \frac{u(y)-u(x)}{2}\Big| \frac{|y-x|}{2}=s \right\rbrace.$$ 
The estimate of modulus of continuity has been studied by B. Andrews and J. Clutterbuck in several papers \cite{AC1} \cite{AC2}. 
B. Andrews and J. Clutterbuck \cite{AC3}, B. Andrews and L. Ni \cite{AN} and L. Ni \cite{N1} have also studied the modulus of continuity for heat equations on manifolds. 

More precisely, B. Andrews and J. Clutterbuck considered the following quasilinear evolution equation
\begin{equation}\label{general quasilinear equation}
u_t=a^{ij}(Du, t) D_iD_j u+b(Du, t) 
\end{equation}
where $A(p,t)=\left(a^{ij}(p, t)\right)$ is positive semi-definite. Under the assumption that there exists a continuous function 
$\alpha: \R_{+}\times [0, T] \to \R$ with
\begin{equation} \label{assumption equation}
0 < \alpha(R, t) \leq R^2 \inf_{|p|=R, (v \cdot p) \neq 0} \frac{v^{T}A(p, t)v}{(v \cdot p)^2},
\end{equation}
They showed \cite[Theorem 3.1]{AC2} that the modulus of continuity of a regular periodic solution to \eqref{general quasilinear equation} is a viscosity subsolution of the one dimensional equation 
\begin{equation} \label{general one dim equaiton}
\phi_t = \alpha(|\phi^{\prime}|, t) \phi^{\prime \prime}.
\end{equation}
Note that their result is applicable to any anisotropic mean curvature flow and can be used to obtain gradient estimate and thus existence and uniqueness of \eqref{general quasilinear equation}. 

The first result of this paper is that the same holds for viscosity solutions of \eqref{general quasilinear equation} 
when \eqref{assumption equation} holds and $a^{ij}, b :\R^n \times [0, T] \to \R$ are continuous functions.  

\begin{thm}\label{thm for general periodic equation}
Let $u: \R^n \times [0, T] \to \R$ be a continuous periodic viscosity solution of \eqref{general quasilinear equation}. Then the modulus of continuity $\omega(s,t) =\sup \left\lbrace \frac{u(y,t)-u(x,t)}{2}\Big| \frac{|y-x|}{2}=s \right\rbrace$ of $u$ is a viscosity subsolution of the one dimensional equation \eqref{general one dim equaiton}.
\end{thm}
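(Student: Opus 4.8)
The plan is to verify the viscosity subsolution property of $\omega$ directly from the definition. Suppose $\psi$ is a smooth function that touches $\omega$ from above at an interior point $(s_0, t_0)$ with $s_0 > 0$, so $\omega \leq \psi$ near $(s_0, t_0)$ and $\omega(s_0, t_0) = \psi(s_0, t_0)$. We must show $\psi_t(s_0, t_0) \leq \alpha(|\psi'(s_0, t_0)|, t_0)\, \psi''(s_0, t_0)$. By definition of $\omega$ (and a compactness argument using periodicity, which guarantees the supremum is attained), there exist points $x_0, y_0 \in \R^n$ with $\frac{|y_0 - x_0|}{2} = s_0$ and $u(y_0, t_0) - u(x_0, t_0) = 2\omega(s_0, t_0) = 2\psi(s_0, t_0)$. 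The key geometric construction is then to build, from $\psi$, a function of two space variables that touches the difference $u(y,t) - u(x,t)$ from above at $(x_0, y_0, t_0)$: namely, set $\Psi(x, y, t) := 2\psi\!\left(\frac{|y - x|}{2}, t\right)$. From the modulus-of-continuity inequality $u(y,t) - u(x,t) \leq 2\omega(\frac{|y-x|}{2}, t) \leq 2\psi(\frac{|y-x|}{2}, t) = \Psi(x, y, t)$ near $(x_0, y_0, t_0)$, with equality at that point, so $\Psi$ is an upper test function for $(x,y,t) \mapsto u(y,t) - u(x,t)$ there.

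The next step is to invoke the theory of viscosity solutions for the \emph{doubled} equation. Since $u$ is a viscosity solution of \eqref{general quasilinear equation}, the function $v(x, y, t) := u(y, t) - u(x, t)$ is a viscosity subsolution of the natural equation obtained by summing the equations in the $x$- and $y$-variables: roughly, $v_t \leq a^{ij}(D_y v, t) D_{y_i} D_{y_j} v + a^{ij}(-D_x v, t) D_{x_i} D_{x_j} v + b(D_y v, t) - b(-D_x v, t)$. The rigorous justification of this is the standard ``sum of solutions'' / ``doubling the variables'' machinery, carried out via the Crandall--Ishii lemma (theorem on sums): from the fact that $\Psi$ touches $v$ from above, one extracts, for each $\varepsilon > 0$, points near $(x_0, y_0, t_0)$ and matrices $X, Y$ controlled by $D^2\Psi$ such that the subsolution inequalities for $u$ hold simultaneously at an $x$-point and a $y$-point; adding them and passing to the limit yields the differential inequality for $\Psi$ at $(x_0, y_0, t_0)$.

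The final step is the linear-algebra computation that converts the two-variable inequality for $\Psi(x,y,t) = 2\psi(\frac{|y-x|}{2}, t)$ into the one-dimensional inequality for $\psi$. Writing $r = \frac{|y-x|}{2}$ and $w = \frac{y_0 - x_0}{|y_0 - x_0|}$, one computes $D_y \Psi = \psi'(r,t)\, w$, $D_x \Psi = -\psi'(r,t)\, w$, and the Hessians of $\Psi$ decompose into a radial part (coefficient $\frac{1}{2}\psi''$, in the direction $w$) and a spherical part (coefficient $\frac{\psi'}{2r}$, on $w^\perp$); crucially the mixed $x$-$y$ blocks have the opposite sign. Feeding these into the summed operator, the spherical (first-order) terms can be dropped because $A$ is positive semi-definite and appear with a favorable sign, the first-order $b$-terms cancel since both arguments equal $\psi'(r,t)\, w$, and the radial terms combine with coefficient $\psi''(r, t) \cdot \frac{1}{2}\bigl(w^T A(\psi' w, t) w + w^T A(\psi' w, t) w\bigr)$, which by assumption \eqref{assumption equation} (with $|p| = R = |\psi'|$, $v = w$, so $(v \cdot p)^2 = |\psi'|^2$ and $v^T A v \geq \alpha(|\psi'|,t)|\psi'|^2 / |\psi'|^2 \cdot$... more precisely $R^2 v^T A v / (v\cdot p)^2 \geq \alpha$) is bounded below appropriately — giving $\psi_t \leq \alpha(|\psi'|, t)\psi''$ at $(s_0, t_0)$.

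I expect the main obstacle to be the second step: making the ``difference of a viscosity solution is a viscosity subsolution of the doubled equation'' rigorous at the level of test functions, i.e., correctly applying the Crandall--Ishii theorem on sums and handling the time variable (parabolic version), especially since the equation is only assumed to have continuous, not smooth, coefficients, and since at the touching point one needs $s_0 > 0$ to avoid the singularity of the spherical term — the boundary case $s_0 = 0$ and the behavior at $s = 0$ must be treated separately, as must the existence/attainment of the extremal pair $(x_0, y_0)$, which is where periodicity is essential.
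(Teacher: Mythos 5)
Your overall strategy matches the paper's: locate an extremal pair $(x_0,y_0)$ using periodicity and continuity, observe that $Z(x,y,t)=u(y,t)-u(x,t)-2\phi\bigl(\tfrac{|y-x|}{2},t\bigr)$ has a local maximum at $(x_0,y_0,t_0)$, apply the parabolic Crandall--Ishii theorem on sums to get semijets $(b_1,\phi'e_n,X)$ and $(-b_2,\phi'e_n,Y)$ with a block Hessian bound, use the sub/supersolution property of $u$ at each point, and note that the first-order $b$-terms cancel. All of that is right.

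The gap is in the linear-algebra finish. First, the spherical terms do \emph{not} come with a favorable sign: with the naive trace estimate $\mathrm{tr}\bigl(A(X-Y)\bigr)\le 2\,\mathrm{tr}(AB)$ one gets $2\phi_t\le A_{nn}\phi''+\sum_{i<n}A_{ii}\,\tfrac{\phi'}{s_0}$, and the second sum is nonnegative whenever $\phi'\ge0$ (and its sign is not controlled in general), so it cannot simply be discarded. Second, even the radial term is wrong: you obtain $(e_n^TAe_n)\,\phi''$ rather than $\alpha\,\phi''$, and the hypothesis \eqref{assumption equation} only gives a \emph{lower} bound $e_n^TAe_n\ge\alpha$, so $(e_n^TAe_n)\phi''\le\alpha\phi''$ only if $\phi''\le0$, which is not guaranteed for a test function touching $\omega$ from above. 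The paper's resolution of both issues is the matrix trick you omit: introduce a symmetric $C$ with $\left(\begin{smallmatrix}A&C\\ C&A\end{smallmatrix}\right)\ge0$ so that $\mathrm{tr}\bigl(A(X-Y)\bigr)=\mathrm{tr}\left[\left(\begin{smallmatrix}A&C\\ C&A\end{smallmatrix}\right)\left(\begin{smallmatrix}X&0\\0&-Y\end{smallmatrix}\right)\right]\le 2\,\mathrm{tr}\bigl((A-C)B\bigr)+\lambda(\cdots)$, and choose $C=A-2\alpha(|\phi'|)\,e_n\otimes e_n$. Then $A-C=2\alpha\,e_n\otimes e_n$ kills the spherical entries of $B$ and gives exactly $2\,\mathrm{tr}\bigl((A-C)B\bigr)=2\alpha\phi''$, while $A+C=2A-2\alpha\,e_n\otimes e_n\ge0$ is precisely the content of assumption \eqref{assumption equation}. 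Without this choice of $C$, the computation does not close.
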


We also study the modulus of continuity for singular evolution equations. 
As summarized in a recent survey \cite{A} by B. Andrews, for the isotropic flows of the form
\begin{equation}\label{isotropic flow}
u_t = \left[a(|Du|) \frac{D_iu D_ju}{|Du|^2}+b(|Du|)\left(\delta_{ij}-\frac{D_iu D_ju}{|Du|^2}\right)\right]D_iD_ju, 
\end{equation} 
the modulus of continuity of a spatially periodic smooth solution of \eqref{isotropic flow} is a viscosity subsolution of the corresponding one-dimensional heat equation $\omega_t=a(\omega^{\prime})\omega^{\prime \prime}$. Note that equation \eqref{isotropic flow} covers the classical heat equation, the graphical mean curvature flow and the $p$-Laplace heat equation with suitable choices of $a$ and $b$. When \eqref{isotropic flow} is nonsingular, it is covered by \eqref{general quasilinear equation}. When it is singular, it has to be treated differently since there are various definitions for viscosity solutions of singular equations. We will focus on the particular case $a=0$ and $b=1$, which corresponds to the level set mean curvature flow:
\begin{equation}\label{level set mean curvature flow}
u_t= \left(\delta_{ij}-\frac{D_iu D_ju}{|Du|^2}\right)D_iD_ju.
\end{equation} 
Equation \eqref{level set mean curvature flow} was studied by L. Evans and J. Spruck in \cite{ES1}. They gave a definition of viscosity solution and proved that for an initial data $g$ that is continuous and constant on $\mathbb{R}^n \cap \{|x|\geq S\}$, there exists a unique viscosity solution $u$ that is continuous and constant on $\mathbb{R}^n \cap \{|x|\geq R\}$, with $R$ depending only on $S$. We will recall their definition in Section 2. 

We prove the following theorem:
\begin{thm} \label{thm for level set mean curvature flow}
Let $u:\mathbb{R}^n \times [0,\infty) \to \mathbb{R} $ be a viscosity solution of \eqref{level set mean curvature flow} 
with continuous initial data $g$ that is a constant on $\mathbb{R}^n \cap \{|x|\geq S\}$. Then the modulus of continuity $\omega(s,t) =\sup \left\lbrace \frac{u(y,t)-u(x,t)}{2}\Big| \frac{|y-x|}{2}=s \right\rbrace$ of $u$ is a viscosity subsolution of $\omega_t=\max\{0, \frac 1 4 \left( \omega^{\prime \prime}+|\omega^{\prime \prime}|\right)\}$ on $(0,\infty) \times (0,\infty)$.
\end{thm}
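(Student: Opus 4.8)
The plan is to run the doubling-of-variables argument from the proof of Theorem~\ref{thm for general periodic equation}, with the compactness that periodicity provided there replaced by the Evans--Spruck structure theorem: $u(\cdot,t)$ is constant on $\{|x|\ge R\}$ for a single $R$, for all $t$. This makes $\omega$ continuous and makes the supremum defining $\omega(s,t)$ attained. Unwinding the definition of viscosity subsolution, it suffices to show that if $\phi$ is smooth and touches $\omega$ from above at $(s_0,t_0)$ with $s_0,t_0>0$, then $\phi_t(s_0,t_0)\le\max\{0,\tfrac14(\phi''+|\phi''|)\}(s_0,t_0)$. If $\omega(s_0,t_0)=0$ this is immediate: $\phi\ge\omega\ge0$ then attains the value $0$ at $(s_0,t_0)$, so $\phi_t=\phi'=0$ and $\phi''\ge0$ there. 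So assume $\omega(s_0,t_0)>0$ and fix $x_0\ne y_0$ with $|y_0-x_0|=2s_0$ and $u(y_0,t_0)-u(x_0,t_0)=2\omega(s_0,t_0)=2\phi(s_0,t_0)$; set $\nu=(y_0-x_0)/|y_0-x_0|$.

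Next I would double variables. Using $u(y,t)-u(x,t)\le2\omega(|y-x|/2,t)\le2\phi(|y-x|/2,t)$ near $(x_0,y_0,t_0)$ together with $\langle y-x,\nu\rangle\le|y-x|$, one checks that for a suitable $K>0$ the function
\[
\Phi(x,y,t)=u(y,t)-u(x,t)-2\phi\!\left(\tfrac{\langle y-x,\nu\rangle}{2},t\right)-K\bigl(|y-x|^2-\langle y-x,\nu\rangle^2\bigr)
\]
has a local maximum at $(x_0,y_0,t_0)$; here $K$ must be large when $\phi'(s_0,t_0)\ne0$, but \emph{any} $K>0$ (in particular $K\downarrow0$) works when $\phi'(s_0,t_0)=0$, which is exactly where the hypothesis $s_0>0$ is used. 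The virtue of this test function is that it is smooth --- $|y-x|$ is never differentiated --- with, at $(x_0,y_0,t_0)$: time derivative $2\phi_t(s_0,t_0)$; spatial gradient in $y$ and in $-x$ both equal to $p:=\phi'(s_0,t_0)\,\nu$; and Hessian in $(y,x)$ equal to $\left(\begin{smallmatrix}N&-N\\-N&N\end{smallmatrix}\right)$ with $N=\tfrac12\phi''\,\nu\nu^{T}+2K\,(I-\nu\nu^{T})$.

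I would then apply the parabolic theorem of sums to $\Phi$ (after the usual extra quartic penalty in $x-x_0,y-y_0$ to make the maximum strict): it produces $\tau_1,\tau_2\in\R$ and symmetric matrices $X,Y$ with $(\tau_1,p,X)\in\overline{\mathcal P}^{2,+}u(y_0,t_0)$, $(\tau_2,p,Y)\in\overline{\mathcal P}^{2,-}u(x_0,t_0)$, $\tau_1-\tau_2=2\phi_t(s_0,t_0)$, and $\left(\begin{smallmatrix}X&0\\0&-Y\end{smallmatrix}\right)\le\left(\begin{smallmatrix}N&-N\\-N&N\end{smallmatrix}\right)$. Pairing the last inequality with $(\xi,\xi)$ gives $X\le Y$, while pairing with $(\xi,0)$ and $(0,\xi)$ gives $X\le N$ and $-Y\le N$ (hence, once $K\downarrow0$, $\lambda_{\max}(X)\le\tfrac12(\phi'')^{+}$, $\lambda_{\min}(Y)\ge-\tfrac12(\phi'')^{+}$, and $X,Y$ are nonpositive resp.\ nonnegative on $\nu^{\perp}$). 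Now I use that $u$ is an Evans--Spruck viscosity solution. If $p\ne0$ we are on the nonsingular branch with unit normal $\pm\nu$, so $\tau_1\le\operatorname{tr}((I-\nu\nu^{T})X)$ and $\tau_2\ge\operatorname{tr}((I-\nu\nu^{T})Y)$; since $X\le Y$ and $I-\nu\nu^{T}\ge0$, subtracting gives $\phi_t(s_0,t_0)\le0$. If $p=0$ we are on the singular branch, where the Evans--Spruck inequalities for the closed semijets read $\tau_1\le\operatorname{tr}(X)-\lambda_{\min}(X)$ and $\tau_2\ge\operatorname{tr}(Y)-\lambda_{\max}(Y)$; writing these as sums of the top, resp.\ bottom, $n-1$ eigenvalues, and using $\lambda_{\max}(X)\le\tfrac12(\phi'')^{+}$, $\lambda_{\min}(Y)\ge-\tfrac12(\phi'')^{+}$ and Cauchy interlacing against the $\nu^{\perp}$ signs, one gets $2\phi_t(s_0,t_0)\le(\phi'')^{+}(s_0,t_0)$, i.e.\ $\phi_t(s_0,t_0)\le\max\{0,\tfrac14(\phi''+|\phi''|)\}(s_0,t_0)$, as desired.

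The step I expect to be the main obstacle is this singular branch $p=0$: it is precisely here that the level-set mean curvature operator is multivalued, and one must both justify that $K$ may be sent to $0$ (so that $N$ is controlled purely by $\phi''$) and then distill, from the scalar consequences of the single matrix inequality, exactly the bound $\max\{0,\tfrac14(\omega''+|\omega''|)\}$ rather than an uncontrolled right-hand side. Beyond that, the routine points are: verifying the continuity of $\omega$ and the attainment of the supremum from the Evans--Spruck structure; handling the closed semijets $\overline{\mathcal P}^{2,\pm}$ correctly on the singular branch (passing to the limit in $K$ and in the theorem-of-sums parameter, noting that $p$ stays $0$ throughout); and the standard localization making the maximum of $\Phi$ strict before the theorem of sums is applied.
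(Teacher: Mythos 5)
Your overall strategy is the same doubling-of-variables argument the paper uses, and you correctly identify that the Evans--Spruck structure (that $u(\cdot,t)$ is constant outside a fixed ball) replaces periodicity in making the supremum attained. But two things go wrong relative to the paper's argument.

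First, the modifications to the test function are unnecessary. The paper works directly with $\phi(|y-x|/2,t)$; this is smooth near $(x_0,y_0,t_0)$ because $|y_0-x_0|=2s_0>0$, and the Hessian $\begin{pmatrix}B&-B\\-B&B\end{pmatrix}$ with $B=\mathrm{diag}(\phi'/2s_0,\dots,\phi'/2s_0,\tfrac12\phi'')$ is available in closed form. Replacing $|y-x|$ by $\langle y-x,\nu\rangle$ and adding a $K$-penalty buys nothing and introduces an extra limiting parameter: the matrices $X,Y$ produced by the theorem of sums depend on $K$, so sending $K\downarrow 0$ after applying the Evans--Spruck inequalities is a limit you would have to justify (compactness of the $X(K)$, $Y(K)$ and closedness of $\overline{\mathcal P}^{2,\pm}$), whereas the paper has no such limit.

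Second, and this is the real gap, your statement of the singular-branch inequality is not what the Evans--Spruck definition gives. The definition says: for some $\eta$ with $|\eta|\le 1$, $\tau_1\le\operatorname{tr}(X)-\eta^{T}X\eta$. You replace this by $\tau_1\le\operatorname{tr}(X)-\lambda_{\min}(X)$. Since $\min_{|\eta|\le 1}\eta^{T}X\eta=\min\{0,\lambda_{\min}(X)\}$ (the admissible $\eta$ include $\eta=0$), the definition only guarantees $\tau_1\le\operatorname{tr}(X)-\min\{0,\lambda_{\min}(X)\}$; your sharper form requires $\lambda_{\min}(X)\le 0$, a fact you do not establish before invoking it, and which in your setup follows from $X\le N$ only after $K\downarrow 0$ (the order-of-limits issue again). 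The paper sidesteps all of this: it does not optimize over $\eta$ at all. It simply takes the $\xi,\eta$ supplied by the definition, uses $X\le B+2\lambda B^{2}$, $-Y\le B+2\lambda B^{2}$ together with $I-\xi\otimes\xi\ge 0$, $I-\eta\otimes\eta\ge 0$ to bound the traces explicitly, and arrives at
\[
2\phi_{t}\ \le\ \Bigl(1-\tfrac{\xi_n^{2}+\eta_n^{2}}{2}\Bigr)\bigl(\phi''+\lambda(\phi'')^{2}\bigr),
\]
where the prefactor lies in $[0,1]$. Sending $\lambda\to 0$ then yields $2\phi_t\le\max\{0,\phi''\}$ in a single line, with no interlacing and no appeal to extremal $\eta$. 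Your Cauchy-interlacing route may well be repairable, but as written it rests on a characterization of the singular branch that is strictly stronger than the definition; you should either prove $\lambda_{\min}(X)\le 0$ at the point where you use it, or, better, follow the paper and keep $\xi,\eta$ explicit.
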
 
As an immediate consequence, we have that any concave modulus of continuity for the initial data is preserved by the level set mean curvature flow. 
\begin{cor} \label{cor}
Let $u:\mathbb{R}^n \times [0,\infty) \to \mathbb{R} $ be a viscosity solution of \eqref{level set mean curvature flow} 
with continuous initial data $g$ that is a constant on $\mathbb{R}^n \cap \{|x|\geq S\}$. 
Assume $\phi$ is nonnegative, concave and satisfies
$|g(y)-g(x)| \leq 2 \phi\left(\frac{|y-x|}{2}\right)$ for all $x, y$, then 
$$|u(y,t)-u(x,t)| \leq 2 \phi\left(\frac{|y-x|}{2}\right)$$
for all $x$, $y$ and $t\geq 0$. 
\end{cor}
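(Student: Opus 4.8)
\emph{Proof proposal.} The plan is to read the estimate off from Theorem~\ref{thm for level set mean curvature flow} by comparing the optimal modulus of continuity $\omega$ against the stationary barrier $\phi$. The key observations are: (i) the nonlinearity can be rewritten as $\max\{0,\tfrac14(r+|r|)\}=\tfrac12\max\{r,0\}$, which is continuous, nondecreasing in $r$, and independent of the solution and its gradient, so the equation $\omega_t=\max\{0,\tfrac14(\omega''+|\omega''|)\}$ is a proper degenerate parabolic equation to which a comparison principle applies; (ii) a nonnegative concave function on $[0,\infty)$ is automatically nondecreasing, since otherwise concavity would force it to tend to $-\infty$; and (iii) $\phi$, regarded as a function of $(s,t)$ that does not depend on $t$, is a viscosity supersolution of this one-dimensional equation. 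For (iii): if $\psi\in C^2$ touches $(s,t)\mapsto\phi(s)$ from below at an interior point $(s_0,t_0)$, then freezing $s=s_0$ shows $t\mapsto\psi(s_0,t)$ has an interior maximum at $t_0$, so $\psi_t(s_0,t_0)=0$; and freezing $t=t_0$ and subtracting a supporting line of $\phi$ at $s_0$ (which exists since $\phi$ is concave and $s_0>0$) shows that $s\mapsto\psi(s,t_0)$ has, after this affine correction, an interior maximum at $s_0$, so $\psi_{ss}(s_0,t_0)\le0$. Hence $\psi_t(s_0,t_0)=0\ge\tfrac12\max\{\psi_{ss}(s_0,t_0),0\}$, which is exactly the required supersolution inequality.

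Next I would run the comparison step. By Theorem~\ref{thm for level set mean curvature flow}, $\omega$ is a viscosity subsolution of the same equation on $(0,\infty)\times(0,\infty)$; moreover $\omega$ is continuous up to $t=0$ because $u$ is continuous and, by Evans--Spruck, constant outside a fixed ball, and the hypothesis on $\phi$ gives $\omega(s,0)=\sup\{\tfrac{g(y)-g(x)}{2}:\tfrac{|y-x|}{2}=s\}\le\phi(s)$ for all $s\ge0$. Invoking the comparison principle for the proper degenerate parabolic equation above then yields $\omega(s,t)\le\phi(s)$ for all $s,t>0$. \emph{The main obstacle} is that the spatial domain $(0,\infty)$ is noncompact and carries the lateral boundary $\{s=0\}$ at which the modulus degenerates, so the comparison principle must be set up with care. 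To handle this I would use that, since $g$ (hence $u(\cdot,t)$) is constant outside a fixed ball, $\omega(\cdot,t)$ is bounded and is in fact constant in $s$ for all large $s$, which rules out any escape of the supremum of $\omega-\phi$ to $s=+\infty$; and near $s=0$ one uses $\omega(s,t)\to0$ as $s\to0^{+}$ together with $\phi(0)\ge0$, comparing if necessary against the enlarged barrier $\phi(s)+\eta$ and letting $\eta\downarrow0$. With these reductions the standard doubling-of-variables and penalization argument goes through.

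Finally, for arbitrary $x,y$ and $t\ge0$, set $s=\tfrac{|y-x|}{2}$. Both $\tfrac{u(y,t)-u(x,t)}{2}$ and $\tfrac{u(x,t)-u(y,t)}{2}$ are admissible in the supremum defining $\omega(s,t)$, so $|u(y,t)-u(x,t)|\le 2\,\omega\!\big(\tfrac{|y-x|}{2},t\big)\le 2\,\phi\!\big(\tfrac{|y-x|}{2}\big)$, which is the assertion (for $t=0$ this is just the hypothesis). Thus the only genuine work lies in the comparison step; the rest is the elementary remark that concavity of $\phi$ makes it a stationary supersolution of the one-dimensional equation.
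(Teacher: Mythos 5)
Your argument is correct and rests on the same two pillars as the paper's: concavity makes $\phi$ (viewed as a time-independent function of $(s,t)$) a viscosity supersolution of the one-dimensional equation, and Theorem~\ref{thm for level set mean curvature flow} makes $\omega$ a subsolution of that equation, so the conclusion is $\omega\le\phi$ by comparison, from which the modulus-of-continuity estimate follows by the elementary final step you give. Where the paper differs is in the implementation of the comparison step: rather than invoking the general comparison machinery (doubling of variables, penalization, careful treatment of the lateral boundary $s=0$ and of $s\to\infty$) as you propose, it perturbs to $\phi_\epsilon=\phi+\epsilon e^t$, observes $\partial_t\phi_\epsilon>0=\max\{0,\tfrac14(\phi_\epsilon''+|\phi_\epsilon''|)\}$, and concludes in one line that such a strict supersolution ``cannot touch $\omega$ from above'' at an interior point by the subsolution property. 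The $\epsilon e^t$ trick buys brevity and avoids having to cite a comparison theorem for this specific proper degenerate equation, but it silently relies on exactly the boundary and decay-at-infinity facts that you make explicit ($\omega(0^+,t)=0\le\phi(0)$ and constancy of $\omega(\cdot,t)$ for large $s$ coming from Evans--Spruck); your version is longer but spells out why the supremum of $\omega-\phi_\epsilon$ cannot escape the interior of the domain.
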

\begin{proof}[Proof of Corollary \ref{cor}]
Since the function $\phi_{\epsilon}=\phi + \epsilon e^t$ satisfies 
$$\partial_t \phi_{\epsilon} >0=\max\{0, \frac 1 4 \left( \phi^{\prime \prime}+|\phi^{\prime \prime}|\right)\},$$
so it cannot touch $\omega$ from above by Theorem \ref{thm for level set mean curvature flow}. 
\end{proof}

\section{Definitions of Viscosity Solutions}

We give definition of a viscosity solution for the general equation
\begin{equation} \label{nonsingular equation}
u_t+F(x, t, u, \nabla u, \nabla^2 u)=0
\end{equation} 
assuming $F: \R^n \times [0,T] \times \R \times \R{^n}\times S^{n\times n} \to \R$ is continuous and degenerate elliptic.
Let $O$ be an open subset of $\Omega \times (0,T)$. We write $z=(x,t)$ and $z_0=(x_0, t_0)$. 

The following notations are useful:
$$\mbox{USC}(O)=\{u:O \to \R |\  u \mbox{  is upper semicontinuous  }\},$$
$$\mbox{LSC}(O)=\{u:O \to \R |\  u \mbox{  is lower semicontinuous  }\},$$
\begin{definition}
(i) A function $u \in \mbox{USC}(O)$ is a viscosity subsolution of \eqref{nonsingular equation}
in $O$ if for any $\phi \in C^{\infty}(O)$ such that $u-\phi$ has a local maximum at $z_0 \in O$, then
\begin{align*}
\phi_t(z_0) + F(z_0,u(z_0),\nabla \phi(z_0), \nabla^2 \phi(z_0) ) \leq 0. 
\end{align*}

(ii) A function $u \in \mbox{LSC}(O)$ is a viscosity supersolution of \eqref{nonsingular equation}
in $O$ if for any $\phi \in C^{\infty}(O)$ such that $u-\phi$ has a local minimum at $z_0 \in O$, then
\begin{align*}
\phi_t(z_0) + F(z_0,u(z_0),\nabla \phi(z_0), \nabla^2 \phi(z_0)) & \geq 0.
\end{align*}

(iii) A viscosity solution of \eqref{nonsingular equation} in $O$ is defined to be a continuous function that is both a 
viscosity subsolution and a viscosity supersolution of \eqref{nonsingular equation} in $O$. 
\end{definition}  

We have an equivalent definition in terms of parabolic semijets. Assume $u \in \mbox{USC}(O)$ and $z_0\in O$. 
The parabolic superjet of $u$ at $z_0$, denoted by $\mathcal{P}^{2,+}u(z_0) $, is defined by 
\begin{align*}
\mathcal{P}^{2,+}u(z_0)=&\{(\tau,p,X)\in \R \times \R^n \times S^{n\times n} | u(z) \leq  u(z_0) +\tau(t-t_0)+p\cdot(x-x_0)\\
& +\frac{1}{2} (x-x_0)^{T}X(x-x_0)+o(|x-x_0|^2+|t-t_0|) \mbox{  as  } z\to z_0 \}.
\end{align*}
The parabolic subjet of $u \in \mbox{LSC}(O)$ at $z_0$, denoted by $\mathcal{P}^{2,-}u(z_0) $, is defined by 
\begin{equation*}
\mathcal{P}^{2,-}u(z_0)=-\mathcal{P}^{2,+}(-u)(z_0).
\end{equation*}

\begin{definition}
(i) A function $u \in \mbox{USC}(O)$ is a viscosity subsolution of \eqref{nonsingular equation}
in $O$ if for all $(x,t) \in O$ and $(\tau, p, X) \in \mathcal{P}^{2,+}u(x,t)$, 
\begin{align*}
\tau +F(z, u(z), p, X) \leq 0.
\end{align*}

(ii) A function $u \in \mbox{LSC}(O)$ is a viscosity supersolution of \eqref{nonsingular equation}
in $O$ if for all $(x,t) \in O$ and $(\tau, p, X) \in \mathcal{P}^{2,-}u(x,t)$, 
\begin{align*}
\tau +F(z, u(z) , p, X) \geq 0.
\end{align*}
\end{definition}

\begin{remark}
In the above definitions, since $F$ is continuous, we can replace $\mathcal{P}^{2,+}u(z_0)$ and $\mathcal{P}^{2,-}u(z_0)$ by 
$\overline{\mathcal{P}}^{2,+}u(z_0)$ and $\overline{\mathcal{P}}^{2,-}u(z_0)$ respectively, where 
the closures are defined by
\begin{align*}
\overline{\mathcal{P}}^{2,+}u(z_0)
&=\{(\tau,p,X)\in \R \times \R^n \times S^{n\times n} | 
\mbox{  there is a sequence  } (z_j,\tau_j,p_j,X_j) \\
&\mbox{  such that  } (\tau_j,p_j ,X_j)\in \mathcal{P}^{2,+}u(z_j) \\
&\mbox{  and  } (z_j,u(z_j),\tau_j,p_j,X_j) \to (z_0,u(z_0),\tau, p ,X) \mbox{  as  } j\to \infty \}.  \\
\overline{\mathcal{P}}^{2,-}u(z_0)&=-\overline{\mathcal{P}}^{2,+}(-u)(z_0). 
\end{align*}
\end{remark}

For singular equations, there are different ways to define viscosity solutions. 
For the level set mean curvature flow \eqref{level set mean curvature flow}, we use the definition given by L. Evans and J. Spruck in \cite{ES1}, where viscosity solutions are called weak solutions. 

\begin{definition} \label{def subsolution}
A continuous and bounded function $u:\R^n \times [0,\infty) \to \R$ is a viscosity subsolution of \eqref{level set mean curvature flow} if for any $\phi \in C^{\infty}(\R^{n+1})$ such that $u-\phi$ has a local maximum at a point $(x_0,t_0)\in \R^n \times (0, \infty)$, 
then we have
\begin{equation*}
\begin{cases}
   \phi_t \leq \left( \delta_{ij}-\frac{D_i\phi D_j\phi}{|D\phi|^2}\right) D_iD_j \phi \mbox{  at  } (x_0,t_0) \\
   \mbox{if  } D\phi(x_0,t_0) \neq 0,
\end{cases}\\
\end{equation*}
and
\begin{equation*}
\begin{cases}
\phi_t \leq \left( \delta_{ij}-\eta_i \eta_j\right) D_iD_j \phi \mbox{  at  } (x_0,t_0) \\
\mbox{  for some  } \eta \in \R^n \mbox{  with  } |\eta|\leq 1, \mbox{  if  } D\phi(x_0,t_0)=0
\end{cases}
\end{equation*}
\end{definition}

\begin{definition} \label{def supersolution}
A continuous and bounded function $u:\R^n \times [0,\infty) \to \R$ is a viscosity supersolution of \eqref{level set mean curvature flow} if for any $\phi \in C^{\infty}(\R^{n+1})$ such that $u-\phi$ has a local minimum at a point $(x_0,t_0)\in \R^n \times (0, \infty)$, 
then we have
\begin{equation*}
\begin{cases}
   \phi_t \geq \left( \delta_{ij}-\frac{D_i\phi D_j\phi}{|D\phi|^2}\right) D_iD_j \phi \mbox{  at  } (x_0,t_0) \\
   \mbox{if  } D\phi(x_0,t_0) \neq 0,
\end{cases}\\
\end{equation*}
and
\begin{equation*}
\begin{cases}
\phi_t \geq \left( \delta_{ij}-\eta_i \eta_j\right) D_iD_j \phi \mbox{  at  } (x_0,t_0) \\
\mbox{  for some  } \eta \in \R^n \mbox{  with  } |\eta|\leq 1, \mbox{  if  } D\phi(x_0,t_0)=0
\end{cases}
\end{equation*}
\end{definition}

\begin{definition} 
A continuous and bounded function $u:\R^n \times [0,\infty) \to \R$ is a viscosity solution of \eqref{level set mean curvature flow} 
provided $u$ is both a viscosity subsolution and a viscosity supersolution.  
\end{definition}

We also have alternative definitions in terms of parabolic semijets. 
\begin{definition}\label{alt def subsolution}
A continuous and bounded function $u:\R^n \times [0,\infty) \to \R$ is a viscosity subsolution of \eqref{level set mean curvature flow}
if for all $(x, t)\in \R^n \times (0, \infty)$ and $(\tau, p, X) \in \mathcal{P}^{2,+}u(x,t)$,  
\begin{equation*}
\tau \leq \left(\delta_{ij}-\frac{p_ip_j}{|p|^2}\right)X_{ij} \mbox{  if  } p\neq 0
\end{equation*}
and
\begin{equation*}
\tau \leq (\delta_{ij}-\eta_i\eta_j)X_{ij} \mbox{  for some  } |\eta|\leq 1, \mbox{  if  } p= 0.
\end{equation*}
\end{definition}

\begin{definition}\label{alt def supersolution}
A continuous and bounded function $u:\R^n \times [0,\infty) \to \R$ is a viscosity supersolution of \eqref{level set mean curvature flow} 
if for all $(x, t)\in \R^n \times (0, \infty)$ and $(\tau, p, X) \in \mathcal{P}^{2,-}u(x,t)$,  
\begin{equation*}
\tau \geq \left(\delta_{ij}-\frac{p_ip_j}{|p|^2}\right)X_{ij} \mbox{  if  } p\neq 0
\end{equation*}
and
\begin{equation*}
\tau \geq (\delta_{ij}-\eta_i\eta_j)X_{ij} \mbox{  for some  } |\eta|\leq 1, \mbox{  if  } p= 0.
\end{equation*}
\end{definition}

\begin{remark}
One can replace $\mathcal{P}^{2,+}u(z_0)$ and $\mathcal{P}^{2,-}u(z_0)$ by 
$\overline{\mathcal{P}}^{2,+}u(z_0)$ and $\overline{\mathcal{P}}^{2,-}u(z_0)$ respectively in the above definitions for the reason of continuity. 
\end{remark}

\section{Proof of Theorem \ref{thm for general periodic equation}}

\begin{proof}[Proof of Theorem \ref{thm for general periodic equation}]
We must show that if $\phi$ is a smooth function such that $\omega -\phi$ has a local maximum at $(s_0,t_0)$ 
 for $s_0>0$ and $t_0>0$, then  at $(s_0,t_0)$ 
 $$\phi_t \leq \alpha(|\phi^{\prime}|, t) \phi^{\prime \prime}.$$
 Since $u$ is continuous and periodic, there exist points $x_0$ and $y_0$ with $|y_0-x_0|=2s_0$ attaining the supremum, 
 $$\omega(s_0,t_0)=\frac{u(y_0,t_0)-u(x_0,t_0)}{2}.$$
 Define 
 \begin{equation*}
 Z(x,y,t):=u(y,t)-u(x,t)-2 \phi\left(\frac{|y-x|}{2},t\right). 
 \end{equation*}
 In view of the definition of $\omega$, we obtain that 
 $$Z(x,y,t)\leq Z(x_0,y_0,t_0)$$ 
 for all $|y-x|$ close to $2s_0$ and $t$ close to $t_0$. 
 Thus $Z$ has a local maximum at $(x_0,y_0,t_0)$. 
 Since $Z$ is continuous, 
 by the parabolic version maximum principle for semicontinuous functions \cite[Theorem 8.3]{CIL}, for any $\lambda >0$,  
 there exist $X, Y \in S^{n\times n}$ such that 
  \begin{equation*}\label{sub semijet}
   (b_1, 2 D_y\phi(s_0, t_0), X) \in \overline{\mathcal{P}}^{2,+} u(y_0,t_0),
  \end{equation*}
 \begin{equation*}\label{super semijet}
 (-b_2, -2D_x\phi(s_0, t_0), Y) \in \overline{\mathcal{P}}^{2,-} u(x_0,t_0),
 \end{equation*}
 \begin{equation*}\label{time semijet }
 b_1+b_2=2\phi_t(s_0, t_0),
 \end{equation*}
 
  \begin{equation}\label{Hessian inequality for quasilinear}
  -\left(\lambda^{-1}+\left\|M\right\| \right)I \leq
    \begin{pmatrix}
   X & 0 \\
   0 & -Y 
   \end{pmatrix}
   \leq M+\lambda M^2, 
  \end{equation}
 where $$M=2D^2\phi=2\begin{pmatrix*}
    D^2_y \phi & D^2_{y,x} \phi \\
    D^2_{x,y} \phi & D^2_x \phi
    \end{pmatrix*} 
    =\begin{pmatrix*}
       B  & -B  \\
       -B  & B 
       \end{pmatrix*} ,$$ 
   with $B=2D^2_y \phi(s_0, t_0)$. 
   
   To simplify, we choose an orthonormal basis of $\R^n$ with $e_n= \frac{y-x}{|y-x|}$, then
      $$2 D_y\phi(s_0, t_0) =-2 D_x\phi(s_0, t_0) =\phi^{\prime}(s_0, t_0) e_n.$$
      $$B= \begin{pmatrix}
           \frac{\phi^{\prime}}{2s_0} &  &  &  \\
            &  \ddots &  &  \\
            & &  \frac{\phi^{\prime}}{2s_0} &  \\
             &  &  & \frac 1 2 \phi^{\prime \prime}
          \end{pmatrix} .$$

   Since $u$ is both a subsolution and a supersolution of \eqref{general quasilinear equation},
    we have 
    \begin{equation*}
    b_1 \leq tr (A(\phi^{\prime} e_n) X) +b(\phi^{\prime} e_n)
    \end{equation*}
    \begin{equation*}
     -b_2 \geq tr (A(\phi^{\prime}e_n) Y) +b(\phi^{\prime} e_n)
     \end{equation*}
    By choosing a symmetric matrix $C$ such that $\begin{pmatrix}
       A & C \\
       C& A
       \end{pmatrix} \geq 0$, we obtain using \eqref{Hessian inequality for quasilinear}
   \begin{align*}
   2\phi_t(s_0,t_0) & =b_1+b_2  \leq tr \left(A(\phi^{\prime}e_n) (X-Y)\right) 
    =tr 
   \begin{pmatrix}
   A & C \\
   C& A
   \end{pmatrix} 
   \begin{pmatrix}
      X & 0 \\
      0 & -Y
   \end{pmatrix}  \\
   & \leq tr \begin{pmatrix}
      A & C \\
      C & A
      \end{pmatrix} 
      \begin{pmatrix}
         B & -B \\
        -B & B
      \end{pmatrix} + \lambda tr \begin{pmatrix}
            A & C \\
            C^{T}& A
            \end{pmatrix} 
            \begin{pmatrix}
               B & -B \\
              -B & B
            \end{pmatrix}^2 \\
     & = 2 tr \left((A-C)B\right) +4 \lambda tr \left((A-C)B^2\right)
     \end{align*}
   Taking $C=A-2 \alpha(|\phi^{\prime}|)e_n \otimes e_n$, it's easy to verify $\begin{pmatrix}
          A & C \\
          C& A
          \end{pmatrix} \geq 0$ due to \eqref{assumption equation}. 
    Thus at $(s_0, t_0)$
    \begin{equation}
    \phi_t \leq \alpha(|\phi^{\prime}|) \phi^{\prime \prime} + \lambda \alpha(|\phi^{\prime}|) (\phi^{\prime \prime})^2
    \end{equation}
Since $\lambda >0$ is arbitrary,  we get 
$$\phi_t \leq \alpha(|\phi^{\prime}|) \phi^{\prime \prime}$$ 
at $(s_0, t_0)$. 
\end{proof}

\section{Proof of Theorem \ref{thm for level set mean curvature flow}}

\begin{proof}[Proof of Theorem \ref{thm for level set mean curvature flow}]
Suppose that $\phi$ is a smooth function such that $\omega - \phi$ has a strict local maximum at $(s_0, t_0)$ with $s_0 >0$ and $t_0>0$. As in the proof of Theorem \ref{thm for general periodic equation}, we arrive at that the function 
\begin{equation*}
 Z(x,y,t):=u(y,t)-u(x,t)-2 \phi\left(\frac{|y-x|}{2},t\right). 
 \end{equation*}
 has a local maximum at $(x_0,y_0,t_0)$. Again we use an orthonormal frame with $e_n=\frac{y-x}{|y-x|}$. 
 The maximum principle for semicontinuous functions \cite[Theorem 8.3]{CIL} implies that for any $\lambda >0$,  
 there exist $X, Y \in S^{n\times n}$ such that 
  \begin{equation*}
   (b_1, \phi^{\prime}(s_0, t_0)e_n, X) \in \overline{\mathcal{P}}^{2,+} u(y_0,t_0)
  \end{equation*}
 \begin{equation*}
 (-b_2, \phi^{\prime}(s_0, t_0)e_n, Y) \in \overline{\mathcal{P}}^{2,-} u(x_0,t_0)
 \end{equation*}
 \begin{equation*}
 b_1+b_2=2\phi_t(s_0, t_0)
 \end{equation*}
 \begin{equation}\label{Hessian inequality}
  -\left(\lambda^{-1}+\left\|M\right\| \right)I \leq
  \begin{pmatrix}
 X & 0 \\
 0 & -Y 
 \end{pmatrix}
 \leq M+\lambda M^2,
 \end{equation}
 where $$M
    =\begin{pmatrix*}
       B  & -B  \\
       -B  & B 
       \end{pmatrix*} ,$$ 
   with $$B=2D^2_y \phi(s_0,t_0)= \begin{pmatrix}
              \frac{\phi^{\prime}}{2s_0} &  &  &  \\
               &  \ddots &  &  \\
               & &  \frac{\phi^{\prime}}{2s_0} &  \\
                &  &  & \frac 1 2 \phi^{\prime \prime}
             \end{pmatrix} .$$
For any vector $p \in \R^n$, we have 
 \begin{equation*}
 p^{T}Xp -p^{T}Yp = 
 (p,p)^{T}\begin{pmatrix}
  X & 0 \\
  0 & -Y 
  \end{pmatrix}(p,p) \leq (p,p)^{T}(M+\lambda M^2)(p,p) =0
 \end{equation*}
 Therefore $X \leq Y$. 
 For simiplicity, we denote $A(p)=I-\frac{p\otimes p}{|p|^2}.$  
If $\phi^{\prime}(s_0, t_0)\neq 0$, then by the definition of viscosity solution of \eqref{level set mean curvature flow},
\begin{align*}
b_1 \leq tr (A(\phi^{\prime}e_n) X), \\
-b_2 \geq tr (A(\phi^{\prime}e_n) Y). 
\end{align*}
Using the fact $X\leq Y$, we get 
$$\phi_t = \frac 1 2 (b_1+b_2) \leq \frac 1 2 tr\left(A(\phi^{\prime}e_n)(X-Y)\right) \leq 0.$$  

If $\phi^{\prime}(s_0, t_0)=0$, then it follows from the definition of a viscosity solutions that for some $\xi, \eta$ with $|\xi|, |\eta| \leq 1$, 
\begin{align*}
b_1 \leq tr (A(\xi) X), \\
-b_2 \geq tr (A(\eta) Y). 
\end{align*}
In view of \eqref{Hessian inequality}, we have $X\leq B+2\lambda B^2$ and $-Y \leq B+2\lambda B^2$. Thus 
$$tr((A(\xi) X))\leq tr\left((A(\xi) (B+2\lambda B^2)\right) = \frac 1 2(1-\xi_n^2) \left((\phi^{\prime \prime}+\lambda (\phi^{\prime \prime})^2 \right),$$
$$tr((A(\eta) Y))\geq tr\left((A(\eta) (-B-2\lambda B^2)\right) = -\frac 1 2(1-\eta_n^2) \left((\phi^{\prime \prime}+\lambda (\phi^{\prime \prime})^2 \right).$$
Therefore,
\begin{align*}
2\phi_t =b_1+b_2\leq tr (A(\xi) X) -tr (A(\eta) Y) \leq \left(1-\frac{\xi_n^2+\eta_n^2}{2}\right) \left(\phi^{\prime \prime} +\lambda (\phi^{\prime \prime})^2\right).
\end{align*}
Letting $\lambda \to 0$ yields 
$$\phi_t \leq \frac 1 4 \left(\phi^{\prime \prime}+|\phi^{\prime \prime}|\right).$$
\end{proof}

\section{Acknowledgement}
The author would like to thank his advisors Professor Lei Ni and Professor Ben Chow for their guidance and encouragement. 
\bibliographystyle{plain}

\bibliography{myref}

\end{document}